\newtheorem{theorem}{Theorem}[section]
\newtheorem{lemma}[theorem]{Lemma}
\theoremstyle{definition}
\newtheorem{remark}[theorem]{Remark}
\theoremstyle{definition}
\theoremstyle{remark}
\def\X{\mathcal{X}}
\newcommand{\kkk}{\mathbf k}
\newcommand{\PP}{\mathcal P}
\newcommand{\zC}{\mathbb C}
\begin{document}
\baselineskip=.75cm

\title{Lower bounds for norms of products of polynomials
on $L_p$ spaces}

\thanks{This project was supported in part by UBACyT W746  and CONICET PIP 0624. The third author has a doctoral fellowship from CONICET}

\author{Daniel Carando}
\author{Dami\'an Pinasco}
\author{Tom\'as Rodr\'iguez}

\address{Departamento de Matem\'{a}tica - Pab I,
Facultad de Cs. Exactas y Naturales, Universidad de Buenos Aires,
(1428) Buenos Aires, Argentina and IMAS-CONICET}
\email{dcarando@dm.uba.ar}
\email{jtrodrig@dm.uba.ar}
\address{Departamento de Matem\'{a}ticas y Estad\'{i}sticas, Universidad Torcuato Di Tella, Mi\~{n}ones 2177 (C1428ATG) Buenos Aires, Argentina and CONICET}
\email{dpinasco@utdt.edu}

\begin{abstract}
For $1< p <2$ we obtain sharp inequalities for the supremum of products of homogeneous polynomials on $L_p(\mu)$, whenever the number of factors is no greater than the dimension of these Banach spaces (a condition readily satisfied in the infinite dimensional settings). The results also holds for the Schatten classes $\mathcal S_p$. For $p>2$ we present some estimates on the involved constants.
\end{abstract}

\subjclass[2010]{46G25,51M16,52A40}
%

\maketitle

\section*{Introduction}

This work is framed in what is sometimes called the \emph{factor problem} for homogeneous polynomials. Given homogeneous polynomials $P_1, \ldots, P_n$  defined on $(\zC^N,\|\cdot\|_p)$, our aim is to find the best constant $M$ such that
\begin{equation}\label{problema}
\Vert P_1 \cdots P_n\Vert \ge M \, \Vert P_1 \Vert \cdots \Vert P_n \Vert .
\end{equation}
The constant will necessarily depend on $p$ and on the degrees of the polynomials, but not on the number of variables $N$. And, of course, we must set what the norm of a polynomial is.

Recall that a mapping $P:X\to \zC$ is a (continuous) $k$-homogeneous  polynomial if there exists a (continuous) $k$-linear map $T:X\times\cdots \times X\to \zC$ such that $P(x)=T(x,\dots,x)$ for all $x\in X$.
The space of continuous $k-$homogeneous polynomials on a Banach space $X$ is denoted by $\mathcal P(^kX)$. It is a Banach space under the uniform norm $$\Vert P \Vert_{\mathcal P(^kX)}=\sup_{\Vert z \Vert_X=1} \vert P(z)\vert.$$ Considering this norm, inequality (\ref{problema}) was studied for polynomials defined on finite and infinite dimensional Banach spaces. For instance R. Ryan and B.~Turett \cite{RT} gave bounds for the special case where the polynomials $\{P_i\}_{i=1}^n$ are actually continuous linear forms on $X$. Moreover, C. Ben\'{i}tez, Y. Sarantopoulos and A. Tonge  \cite{BST} proved that if $P_i$ has degree $k_i$ for $1\le i \le n$, then inequality (\ref{problema}) holds with constant
\[
M=\frac{k_1^{k_1} \cdots k_n^{k_n}}{(k_1+\cdots + k_n)^{(k_1+\cdots +k_n)}}
\]
for any complex Banach space. The authors also showed that this is the best universal constant, since there are polynomials on $\ell_1$ for which equality prevails. However, for many spaces it is possible to improve this bound. For instance, for complex Hilbert spaces, the second named author  proved in \cite{P} that the optimal constant is
\begin{equation}\label{ctel2}
M=\sqrt{\frac{k_1^{k_1} \cdots k_n^{k_n}}{(k_1+\cdots + k_n)^{(k_1+\cdots +k_n)}}}.
\end{equation}
In this work we establish the best constant for complex $L_p(\Omega,\mu)$ spaces whenever $1<p<2$. We show that in this case inequality (\ref{problema}) holds with constant
\begin{equation}\label{eq-la cte p}
M=\sqrt[p]{\frac{k_1^{k_1} \cdots k_n^{k_n}}{(k_1+\cdots + k_n)^{(k_1+\cdots +k_n)}}}.
\end{equation}
The constant is optimal provided the involved spaces have enough dimension at least $n$. This constant also works (and is optimal) for polynomials on the Schatten classes $\mathcal S_p$. For the remaining values of $p$, we obtain some estimates of the optimal constants.

\section{Main results}

We begin with some definitions. If $E$ and $F$ are isomorphic Banach spaces, their {\it Banach-Mazur distance} (see \cite[Chapter~1]{Pi86}, \cite{T}) is defined as
\[
 d(E,F)=\inf \{ \Vert u \Vert \, \Vert u^{-1} \Vert \mid u:E \to F \text{ isomorphism} \} \, .
\]
Given a Banach space $X$ and $n\in \mathbb N$, we define $$D_n(X):=\sup\{d(E,\ell_2^n):E\text{ subspace of } X\text{ with } \dim E=n\}.$$
From Corollary 5 in \cite{L}, we obtain
\begin{equation}\label{eq-lewis}
D_n(L_p(\Omega,\mu)) \leq n^{|1/p-1/2|},
\end{equation}
whenever $L_p(\Omega,\mu)$ has dimension at least $n$.

The proof of the following lemma is inspired by Proposition 1 in \cite{RS}.

\begin{lemma}\label{mismogrado}
Let $X$ be a Banach space and let $P_1,\dots,P_n: X \to \mathbb C$ be ho\-mo\-ge\-neous polynomials of degree $k_1,\ldots, k_n$ respectively. Then
\[
\|P_1\cdots P_n\|_{\PP(^{\kkk}X)} \ge \sqrt{\frac{\prod_{i=1}^{n} {k_i^{k_i}}}{\mathbf k^{\mathbf k}}} \ D_n(X)^{-\kkk}\ \|P_1\|_{\PP(^{k_1}X)}\cdots \|P_n\|_{\PP(^{k_n}X)},
\]
where $\kkk = \sum_{i=1}^n{k_i}$.
\end{lemma}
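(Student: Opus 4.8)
The plan is to reduce the statement to the known optimal Hilbert space bound \eqref{ctel2} by transporting the polynomials onto $\ell_2^n$ through a near-optimal isomorphism and letting the Banach--Mazur distance absorb the distortion. First I would fix $\varepsilon>0$ and, for each $i$, choose a norming point $x_i\in X$ with $\|x_i\|_X=1$ and $|P_i(x_i)|>(1-\varepsilon)\|P_i\|_{\PP(^{k_i}X)}$; this is where the possible non-attainment of the norm in infinite dimensions is handled (if some $\|P_i\|=0$ the inequality is trivial). Since the statement only has content when $\dim X\ge n$, I may enlarge $\mathrm{span}\{x_1,\dots,x_n\}$ to obtain a subspace $E\subseteq X$ with $\dim E=n$ containing all the $x_i$. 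By the definitions of $D_n(X)$ and of the Banach--Mazur distance, I then pick an isomorphism $u:E\to\ell_2^n$ with $\|u\|\,\|u^{-1}\|<D_n(X)+\varepsilon$.

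Next I would push the polynomials forward: set $\widetilde P_i:=P_i\circ u^{-1}$, a $k_i$-homogeneous polynomial on $\ell_2^n$, so that $\widetilde P_1\cdots\widetilde P_n=(P_1\cdots P_n)\circ u^{-1}$. Two elementary norm comparisons, both from homogeneity together with $B_E\subseteq B_X$, carry out the bookkeeping. On one hand, since $\|u^{-1}y\|_X\le\|u^{-1}\|$ for $\|y\|_2\le1$, the product obeys $\|\widetilde P_1\cdots\widetilde P_n\|_{\PP(^{\kkk}\ell_2^n)}\le\|u^{-1}\|^{\kkk}\,\|P_1\cdots P_n\|_{\PP(^{\kkk}E)}$. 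On the other hand, testing each $\widetilde P_i$ on the direction $u(x_i)/\|u\|$ gives $\|\widetilde P_i\|_{\PP(^{k_i}\ell_2^n)}\ge\|u\|^{-k_i}|P_i(x_i)|\ge\|u\|^{-k_i}(1-\varepsilon)\|P_i\|_{\PP(^{k_i}X)}$.

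Then I would invoke the Hilbert space inequality \eqref{ctel2} on $\ell_2^n$, namely $\|\widetilde P_1\cdots\widetilde P_n\|\ge\sqrt{\prod_i k_i^{k_i}/\kkk^{\kkk}}\,\prod_i\|\widetilde P_i\|$, and chain the three estimates with the trivial $\|P_1\cdots P_n\|_{\PP(^{\kkk}X)}\ge\|P_1\cdots P_n\|_{\PP(^{\kkk}E)}$. Because $\sum_i k_i=\kkk$, the factors $\|u^{-1}\|^{\kkk}$ and $\prod_i\|u\|^{k_i}=\|u\|^{\kkk}$ collapse into a single $(\|u\|\,\|u^{-1}\|)^{-\kkk}$, which is bounded below by $(D_n(X)+\varepsilon)^{-\kkk}$ by the choice of $u$. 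Collecting the accumulated $(1-\varepsilon)^n$ and letting $\varepsilon\to0$ yields the claimed bound.

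The genuinely conceptual step is the observation that, after transport, the two distortions $\|u\|$ and $\|u^{-1}\|$ recombine into exactly one power $(\|u\|\,\|u^{-1}\|)^{\kkk}$: this is precisely what allows the Banach--Mazur distance to enter and produces the factor $D_n(X)^{-\kkk}$, with everything else being homogeneity bookkeeping. The main obstacle is thus not any deep difficulty but the disciplined handling of the two independent approximations — non-attainment of the polynomial norms and the infimum defining $d(E,\ell_2^n)$ — keeping every inequality pointing the right way so that both error terms vanish in a single limit.
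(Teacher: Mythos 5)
Your proposal is correct and follows essentially the same route as the paper: restrict to an $n$-dimensional subspace $E$ containing near-norming points, transport the polynomials to $\ell_2^n$ through a near-optimal isomorphism, apply the Hilbert space constant \eqref{ctel2}, and let the Banach--Mazur distortion produce the factor $D_n(X)^{-\kkk}$. The only cosmetic difference is that the paper normalizes the isomorphism $T:\ell_2^n\to E$ to have $\|T\|=1$ and $\|T^{-1}\|\le D_n(X)$, whereas you keep both $\|u\|$ and $\|u^{-1}\|$ and absorb the slack with an extra $\varepsilon$; the bookkeeping is equivalent.
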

\begin{proof}
Given $\varepsilon>0$, we can take a set of norm one vectors $\{x_1, \ldots, x_n\} \subset X$ such that $\vert P_j(x_j)\vert > (1- \varepsilon)\, \Vert P_j\Vert_{\PP(^{k_j}X)}$, for $1\le j \le n$. Let $E\subset X$ be any $n-$dimensional subspace containing the subspace spanned by $\{x_1, \ldots, x_n\}$ and let $T:\ell_2^n\rightarrow E$ be a norm one isomorphism with $\|T^{-1}\|\le D_n(X)$. We have

\begin{eqnarray}
\|P_1\cdots P_n\|_{\PP(^{\kkk}X)} &\ge & \|P_1\cdots P_n\|_{\PP(^{\kkk}E )} \ge \|(P_1\circ T)\cdots (P_n\circ T) \|_{\PP(^{\kkk}\ell_2^n )} \nonumber \\
&\ge& \sqrt{\frac{k_1^{k_1}\cdots k_n^{k_n}}{\kkk^{\kkk}}} \, \|(P_1\circ T) \|_{\PP(^{k_1} \ell_2^n )} \cdots \|(P_n\circ T) \|_{\PP(^{k_n} \ell_2^n )} \label{eq-pinasco2} \\
&\ge& \sqrt{\frac{k_1^{k_1}\cdots k_n^{k_n}}{\kkk^{\kkk}}} \, \frac 1 {\|T^{-1}\|^{\kkk}} \, \|P_1 \|_{\PP(^{k_1}E )} \cdots \|P_n\|_{\PP(^{k_n}E )} \nonumber \\
&>& \sqrt{\frac{k_1^{k_1}\cdots k_n^{k_n}}{\kkk^{\kkk}}} \, D_n(X)^{-\kkk} (1-\varepsilon)^n \|P_1\|_{\PP(^{k_1}X)}\cdots \|P_n\|_{\PP(^{k_n}X)}, \nonumber
\end{eqnarray}
where \eqref{eq-pinasco2} follows from \eqref{ctel2}.
\end{proof}
\begin{remark}\label{mismogrado-remark} If we restrict ourselves to the spaces $L_p(\Omega,\mu)$ and polynomials \emph{with the same degree}, we can combine Lemma \ref{mismogrado} with Lewis' result \eqref{eq-lewis} to obtain \begin{equation}\label{eq-gradosigualesp<2}
\|P_1\cdots P_n\|_{\PP(^{kn}L_p(\Omega,\mu))} \ge \frac 1 {n^{nk/p}}\ \|P_1\|_{\PP(^kL_p(\Omega,\mu))}\cdots \|P_n\|_{\PP(^kL_p(\Omega,\mu))}
\end{equation}
for $1\le p\le 2$. For $2\le p\le\infty$ we have
\begin{equation}\label{eq-gradosigualesp>2}
\|P_1\cdots P_n\|_{\PP(^{kn}L_p(\Omega,\mu))} \ge \frac 1 {n^{nk/q}}\ \|P_1\|_{\PP(^kL_p(\Omega,\mu))}\cdots \|P_n\|_{\PP(^kL_p(\Omega,\mu))},\nonumber
\end{equation}
where $q$ is the conjugate exponent of $p$.
\end{remark}
Note that \eqref{eq-gradosigualesp<2} is precisely \eqref{problema} with the constant $M$ given in \eqref{eq-la cte p}.
In order to extend this result to a general case, where the polynomials have arbitrary degrees, it is convenient to consider another particular case. In the sequel we will say that $P, Q: \ell_p^N \to \zC$ {\it depend on different variables} if it is possible to find disjoint subsets $I, J \subset \{1, 2, \ldots, N\}$, such that $P(\sum_{i=1}^N a_i e_i)=P(\sum_{i \in I} a_i e_i)$ and $Q(\sum_{i=1}^N a_i e_i)=Q(\sum_{i \in J} a_i e_i)$, for all $\{a_i\}_{i=1}^N \subset \zC$.

For polynomials depending on different variables,  \eqref{problema} becomes an equality when $M$ is given by \eqref{eq-la cte p}, as the following lemma shows.

\begin{lemma}\label{distintasvars}
Let $P_1,\dots,P_n$ be homogeneous polynomials of degrees $k_1,\dots,k_n$ respectively, defined on $\ell_p^N$, depending on different variables. If $\mathbf k=k_1+\cdots+k_n$, then we have
\begin{equation} \|P_1\cdots P_n\|_{\PP(^{\mathbf k}\ell^N_p)} = \sqrt[p]{\frac{\prod_{i=1}^{n} {k_i^{k_i}}} {\mathbf k^{\mathbf k}}}\, \|P_1\|_{\PP(^{k_1}\ell_p^N)} \cdots \|P_n\|_{\PP(^{k_n}\ell_p^N)}.\nonumber\end{equation}
\end{lemma}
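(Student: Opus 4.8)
The plan is to reduce the computation of the product norm to an elementary scalar optimization by exploiting the disjointness of the variables. Since the $P_j$ depend on different variables, fix the corresponding disjoint index sets $I_1,\dots,I_n\subset\{1,\dots,N\}$. For a vector $x=\sum_{i=1}^N a_i e_i$, write $x^{(j)}=\sum_{i\in I_j}a_i e_i$ for its restriction to the $j$-th block and set $t_j=\|x^{(j)}\|_p$. Two facts drive the whole argument: that $P_j(x)=P_j(x^{(j)})$, so $P_j$ only sees its own block; and that, because the blocks have disjoint supports, the norm is $p$-additive, $\|x\|_p^p=\sum_{j=1}^n t_j^p$.

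First I would establish the upper bound. By $k_j$-homogeneity, $|P_j(x)|=|P_j(x^{(j)})|\le t_j^{k_j}\,\|P_j\|_{\PP(^{k_j}\ell_p^N)}$, so for every $x$ on the unit sphere of $\ell_p^N$ one has $|P_1(x)\cdots P_n(x)|\le \big(\prod_{j=1}^n t_j^{k_j}\big)\prod_{j=1}^n\|P_j\|_{\PP(^{k_j}\ell_p^N)}$, where the nonnegative numbers $t_j$ satisfy $\sum_j t_j^p=1$. It thus remains to maximize $\prod_j t_j^{k_j}$ subject to this constraint. Setting $u_j=t_j^p$ and applying the weighted AM--GM inequality with weights $w_j=k_j/\kkk$, which sum to $1$, gives $\prod_j u_j^{k_j/\kkk}\le \prod_j (k_j/\kkk)^{k_j/\kkk}$; raising both sides to the power $\kkk/p$ and recalling $u_j^{k_j/p}=t_j^{k_j}$ yields $\prod_j t_j^{k_j}\le \sqrt[p]{\prod_{i=1}^n k_i^{k_i}/\kkk^{\kkk}}$. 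This is exactly the asserted constant, establishing the inequality $\|P_1\cdots P_n\|_{\PP(^{\kkk}\ell_p^N)}\le \sqrt[p]{\prod_{i=1}^n k_i^{k_i}/\kkk^{\kkk}}\ \prod_{j=1}^n\|P_j\|_{\PP(^{k_j}\ell_p^N)}$.

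For the reverse inequality I would exhibit an extremal vector. Working in finite dimensions, each $\|P_j\|_{\PP(^{k_j}\ell_p^N)}$ is attained at some unit vector $y_j$ supported on $I_j$. Taking $t_j=(k_j/\kkk)^{1/p}$, the equality case of the AM--GM step, and setting $x=\sum_{j=1}^n t_j\,y_j$, the disjointness of the supports gives $\|x\|_p^p=\sum_j t_j^p=1$ together with $P_j(x)=t_j^{k_j}P_j(y_j)$. Hence $|P_1(x)\cdots P_n(x)|=\big(\prod_j t_j^{k_j}\big)\prod_j\|P_j\|$ meets the upper bound, and combining the two inequalities gives the claimed equality.

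I do not expect a serious obstacle: the entire argument rests on the $p$-additivity of the norm across disjoint blocks, which decouples the maximization into a single scalar optimization solved by weighted AM--GM. The only points requiring a little care are that the optimal direction within each block may be selected independently, again thanks to disjoint supports, and that in the finite-dimensional setting the individual norms $\|P_j\|$ are genuinely attained, so the identity holds as an exact equality rather than a supremum reached only in the limit.
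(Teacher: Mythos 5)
Your proof is correct and takes essentially the same approach as the paper's: both exploit the disjointness of supports and the $p$-additivity of the norm to reduce the supremum to maximizing $\prod_j t_j^{k_j}$ subject to $\sum_j t_j^p=1$, and both exhibit the extremal vector $\sum_j (k_j/\mathbf{k})^{1/p}y_j$ to show the constant is attained. The only differences are organizational — you treat all $n$ factors at once and solve the scalar optimization by weighted AM--GM, while the paper settles two factors via Lagrange multipliers and then inducts on $n$ — and your one small imprecision (the identity $\|x\|_p^p=\sum_j t_j^p$ is only an inequality $\ge$ when the blocks do not exhaust all coordinates) is harmless, since the maximum of $\prod_j t_j^{k_j}$ under the relaxed constraint $\sum_j t_j^p\le 1$ is attained on the boundary.
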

\begin{proof}
First, we prove this lemma for two polynomials $P$ and $Q$ of degrees $k$ and $l$. We may suppose that $P$ depends on the first $r$ variables and $Q$ on the last $N-r$ ones. Given $z\in \ell_p^N$, we can write $z=x+y$, where $x$ and $y$ are the projections of $z$ on the first $r$ and the last $N-r$ coordinates respectively. We then have
\[
|P(z)Q(z)|=|P(x)Q(y)| \le  \|P\|_{\PP(^k\ell_p^N)}\, \|Q\|_{\PP(^l\ell_p^N)}\, \|x\|_p^k\, \|y\|_p^l.
\]
Since $\|z\|_p^p=\|x\|_p^p+\|y\|_p^p$, we can estimate the norm of $PQ$ as follows
\begin{align*}
\|PQ\|_{\PP(^{k+l}\ell_p^N)} = & \sup_{\|z\|_p=1} |P(z) Q(z)| \\
\le & \sup_{|a|^p+|b|^p=1} |a|^k \, |b|^l \, \|P\|_{\PP(^k\ell_p^N)} \, \|Q\|_{\PP(^l\ell_p^N)} \\
= & \sqrt[p]{\frac{k^k\, l^l}{(k+l)^{(k+l)}}} \, \|P\|_{\PP(^k\ell_p^N)} \, \|Q\|_{\PP(^l\ell_p^N)},
\end{align*}
the last equality being a simple application of Lagrange multipliers. In order to see that this inequality is actually an equality, take $x_0$ and $y_0$ norm-one vectors where $P$ and $Q$ respectively attain their norms, each with nonzero entries only in the coordinates in which the corresponding polynomial depends. If we define $$z_0=\sqrt[p]{\frac k{k+l}} \  x_0 + \sqrt[p]{\frac l{k+l}} \  y_0,$$ then $z_0$ is a norm one vector which satisfies
\[
|P(z_0)Q(z_0)|=\sqrt[p]{\frac{k^k\, l^l}{(k+l)^{(k+l)}}} \, \|P\|_{\PP(^k\ell_p^N)} \, \|Q\|_{\PP(^l\ell_p^N)}.
\]

We prove the general statement by induction on $n$. We assume the result is valid $n-1$ polynomials and we know that it is also valid for two. We omit the subscripts in the norms of the polynomials to simplify the notation. We then have
\begin{eqnarray*}
\left\|\prod_{i=1}^nP_i \right\| & = & \sqrt[p]{\frac{k_n^{k_n} \, \left(\sum_{i=1}^{n-1}k_i\right)^{\sum_{i=1}^{n-1}k_i}}{\left(\sum_{i=1}^{n}k_i\right)^{\sum_{i=1}^{n}k_i}}} \, \left\|\prod_{i=1}^{n-1}P_i \right\| \, \|P_n\| \\ \\
& = & \sqrt[p]{\frac{k_n^{k_n}\left(\sum_{i=1}^{n-1}k_i\right)^{\sum_{i=1}^{n-1}k_i}}{\left(\sum_{i=1}^{n}k_i\right)^{\sum_{i=1}^{n}k_i}}} \, \sqrt[p]{\frac{\prod_{i=1}^{n-1} {k_i^{k_i}}}{\left(\sum_{i=1}^{n-1}k_i\right)^{\sum_{i=1}^{n-1}k_i}}} \, \left(\prod_{i=1}^{n-1}{\|P_i \|}\right) \, \|P_n\| \\ \\
& = & \sqrt[p]{\frac{\prod_{i=1}^{n} {k_i^{k_i}}}{\left(\sum_{i=1}^{n}k_i\right)^{\sum_{i=1}^{n}k_i}}} \, \prod_{i=1}^{n}{\|P_i \|}. \qquad\qedhere
\end{eqnarray*}
\end{proof}
Now we are ready to prove our main result.
\begin{theorem}\label{teoprincipal}
Let $P_1,\dots,P_n$ be homogeneous polynomials of degrees $k_1,\dots,k_n$ respectively on $\ell_p^N$,  $1\le p\le 2$. If $\mathbf k=k_1+\cdots+k_n$, then we have
\begin{equation}\label{eq-principal0}
\|P_1\cdots P_n\|_{\PP(^{\mathbf k}\ell^N_p)} \ge \sqrt[p]{\frac{\prod_{i=1}^{n} {k_i^{k_i}}}{\mathbf k^{\mathbf k}}} \, \|P_1\|_{\PP(^{k_1}\ell_p^N)} \cdots \|P_n\|_{\PP(^{k_n}\ell_p^N)}.
\end{equation}
The constant is optimal provided that $N \ge n$.
\end{theorem}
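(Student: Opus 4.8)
The plan is to bootstrap the two special cases already in hand --- the equal-degree estimate of Remark~\ref{mismogrado-remark} and the exact different-variables identity of Lemma~\ref{distintasvars} --- into the general arbitrary-degree statement. The idea is a padding trick: I would inflate each $P_i$ up to a common degree by multiplying it with an auxiliary monomial living in its own block of fresh variables, apply the equal-degree inequality \eqref{eq-gradosigualesp<2} to the inflated family, and then use Lemma~\ref{distintasvars} to compute every norm that appears. The artificial degrees introduced by the padding should cancel and leave exactly the constant in \eqref{eq-principal0}.

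Concretely, I would normalize so that $\|P_i\|=1$ for all $i$ and fix any integer $d>\max_i k_i$. Enlarge the ambient space to $\ell_p^M$ with $M=N+\sum_{i=1}^n(d-k_i)$, keeping the $P_i$ on the first $N$ coordinates and splitting the remaining coordinates into disjoint blocks $B_1,\dots,B_n$ with $|B_i|=d-k_i\ge 1$. On block $B_i$ take a coordinate functional raised to the power $d-k_i$, call it $Q_i$; it is a norm-one $(d-k_i)$-homogeneous polynomial depending only on the variables of $B_i$. Setting $\tilde P_i=P_i\,Q_i$, a $d$-homogeneous polynomial, the family $\tilde P_1,\dots,\tilde P_n$ has the common degree $d$, so Remark~\ref{mismogrado-remark} gives
\[
\|\tilde P_1\cdots \tilde P_n\|\ \ge\ \frac{1}{n^{nd/p}}\,\prod_{i=1}^n\|\tilde P_i\| .
\]

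Next I would evaluate each norm through Lemma~\ref{distintasvars}. Since $P_i$ and $Q_i$ depend on different variables, $\|\tilde P_i\|=\sqrt[p]{k_i^{k_i}(d-k_i)^{d-k_i}/d^d}$. On the left-hand side, $\tilde P_1\cdots\tilde P_n=(P_1\cdots P_n)(Q_1\cdots Q_n)$, where $P_1\cdots P_n$ lives on the first $N$ coordinates and $Q_1\cdots Q_n$ on the union of the blocks, so these two factors again depend on different variables; applying Lemma~\ref{distintasvars} twice (once to split this product and once to evaluate $\|Q_1\cdots Q_n\|$) yields $\|\tilde P_1\cdots\tilde P_n\|=\sqrt[p]{\mathbf k^{\mathbf k}\prod_i(d-k_i)^{d-k_i}/(nd)^{nd}}\,\|P_1\cdots P_n\|$. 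Substituting both expressions into the displayed inequality, the factors $(nd)^{nd}$ and $\prod_i(d-k_i)^{d-k_i}$ cancel and one is left precisely with \eqref{eq-principal0}, independently of the auxiliary degree $d$. This argument needs no restriction on $N$, since adding the dummy blocks already forces $M\ge n$, so the equal-degree inequality is legitimately available.

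Finally, for optimality under $N\ge n$ I would exhibit extremizers directly: take $P_i(z)=z_i^{k_i}$ for $1\le i\le n$, which are norm-one and depend on different variables, so that Lemma~\ref{distintasvars} turns \eqref{eq-principal0} into an equality and shows the constant cannot be enlarged. The substantive step is the padding construction together with the observation that the different-variables identity computes \emph{both} sides of the equal-degree inequality exactly; the point to check with care is that the spurious degree contributions cancel completely, which is what forces the sharp $p$-th root constant of \eqref{eq-la cte p} to emerge rather than the weaker square-root bound of Lemma~\ref{mismogrado}.
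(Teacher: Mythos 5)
Your proof is correct, and it uses the same two ingredients as the paper's proof --- the equal-degree estimate \eqref{eq-gradosigualesp<2} and the exact different-variables identity of Lemma~\ref{distintasvars}, glued together by padding with coordinate-power monomials on fresh variables --- but it organizes them differently. The paper first treats two polynomials: for $\deg P = k > l = \deg Q$ it adjoins a single new variable, pads only the lower-degree factor with $S=(e'_{N+1})^{k-l}$, applies the two-factor equal-degree bound to $P$ and $QS$, and then obtains the general case by induction on $n$; the case $k=l$ is dispatched separately via Remark~\ref{mismogrado-remark}. You instead pad all $n$ factors simultaneously up to a common degree $d>\max_i k_i$, apply the $n$-factor equal-degree bound a single time, and evaluate every norm in sight exactly via Lemma~\ref{distintasvars}. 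Your cancellation does check out: with $\|P_i\|=1$ one gets $\|\tilde P_i\|=\sqrt[p]{k_i^{k_i}(d-k_i)^{d-k_i}/d^d}$ and $\|\tilde P_1\cdots\tilde P_n\|=\sqrt[p]{\mathbf k^{\mathbf k}\prod_i(d-k_i)^{d-k_i}/(nd)^{nd}}\,\|P_1\cdots P_n\|$, and since $\sqrt[p]{(nd)^{nd}/d^{nd}}=n^{nd/p}$ exactly kills the constant $n^{-nd/p}$, inequality \eqref{eq-principal0} drops out. Your observation that the enlarged dimension $M\ge n$ makes Lewis' estimate (hence \eqref{eq-gradosigualesp<2}) available with no hypothesis on $N$ is sound, and it plays the same role as the paper's move to $\ell_p^{N+1}$; the optimality argument via $P_i=(e_i')^{k_i}$ is identical to the paper's. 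The net comparison: your route is induction-free and needs no case distinction between equal and unequal degrees, at the cost of invoking the $n$-factor equal-degree inequality; the paper only ever needs the two-factor equal-degree bound and one auxiliary variable at a time, but pays with an induction on $n$ and a separate equal-degree case.
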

\begin{proof} We first prove the inequality for two polynomials: we take homogeneous polynomials $P$ and $Q$ of degree $k$ and $l$. If $k=l$, the result follows from Remark~\ref{mismogrado-remark}. Let us suppose $k> l$. Moving to $\ell_p^{N+1}$ if necessary, we take a norm one polynomial $S$, of degree $d=k-l$, depending on different variables than the polynomials $P$ and $Q$. An example of such a polynomial is $(e'_{N+1})^d$. In the following, we identify $\ell_p^{N}$ with a subspace of $\ell_p^{N+1}$ in the natural way. We use Lemma \ref{distintasvars} for equalities \eqref{eq-paso1} and \eqref{eq-paso3}, and inequality \eqref{eq-gradosigualesp<2} for inequality \eqref{eq-paso2} to obtain:
\begin{eqnarray}
\|PQ\|_{\PP(^{k+l}\ell_p^N)} &=& \|PQ\|_{\PP(^{k+l}\ell_p^{N+1})} \, \|S\|_{\PP(^d\ell_p^{N+1})} \nonumber \\
&=&\sqrt[p]{\frac{((k+l)+d)^{(k+l)+d}}{(k+l)^{(k+l)}d^d}} \, \|PQS\|_{\PP(^{2k}\ell_p^{N+1})} \label{eq-paso1}\\
&\ge & \sqrt[p]{\frac{((k+l)+d)^{(k+l)+d}}{(k+l)^{(k+l)}d^d}} \, \frac{1}{4^{k/p}} \, \|P\|_{\PP(^{k}\ell_p^{N+1})} \, \|QS\|_{\PP(^{k}\ell_p^{N+1})} \label{eq-paso2}\\
&=& \sqrt[p]{\frac{(2k)^{2k}}{(k+l)^{(k+l)}\, d^d \, 4^k}} \, \sqrt[p]{\frac{l^ld^d}{k^k}} \, \|P\|_{\PP(^{k}\ell_p^{N+1})} \, \|Q\|_{\PP(^{l}\ell_p^{N+1})} \, \|S\|_{\PP(^d\ell_p^{N+1})} \label{eq-paso3}\\
&=& \sqrt[p]{\frac{k^kl^l}{(k+l)^{k+l}}} \, \|P\|_{\PP(^{k}\ell_p^N)} \, \|Q\|_{\PP(^{l}\ell_p^N)}. \nonumber
\end{eqnarray}
The proof of the general case continues by induction on $n$ as in the previous lemma.

To see that the constant is optimal whenever $N\ge n$, consider for each $i=1,\dots, n$ the polynomial $P_i=(e_i')^{k_i}$. From Lemma \ref{distintasvars}  we  obtain  equality in \eqref{eq-principal0}.
\end{proof}
Theorem~\ref{teoprincipal} holds also for polynomials on $\ell_p$. This is a consequence of the following: if $P\in \PP(^k\ell_p)$ then \begin{eqnarray*}\|P\|_{\PP(^k\ell_p)} = \lim_{N\rightarrow \infty}\|P \circ i_N \|_{\PP(^k\ell_p^N)} \end{eqnarray*}
where $i_N$ is the canonical inclusion of $\ell_p^N$ in $\ell_p$. The proof of this fact is rather standard. Anyway, in the next section we will show that Theorem~\ref{teoprincipal} holds for spaces $L_p(\mu)$, which comprises $\ell_p$ as a particular case.

\section{ Spaces  $L_p$ and Schatten classes }

In this section we show that the results obtained for $\ell_p$ can be extended to spaces $L_p(\Omega,\mu)$ and to the Schatten classes $\mathcal S_p$ for $1\le p\le 2$. We will sometimes omit parts of the proofs which are very similar to those in the previous section.

Let $(\Omega, \mu)$ be a measure space. From now on, the notation $\Omega=A_1 \sqcup \ldots \sqcup A_n$ will mean that it is possible to decompose the set $\Omega$ as the union of measurable subsets $\{A_i\}_{1 \le i \le n}$, such that $\mu(A_i) >0$ for $1\le i \le n$, and $\mu(A_i \cap A_j)=0$ for all $1\le i < j\le n$. Next lemma is the analogue to Lemma~\ref{distintasvars} for $L_p$ spaces.

\begin{lemma}\label{distintasvars2} Let $P,Q:L_p(\Omega,\mu) \to \mathbb C$ be homogeneous polynomials of degree $k$ and $l$ respectively. Suppose that $\Omega=A_1\sqcup A_2$, and that $P(f)=P(f\X_{A_1})$ and $Q(f)=Q(f\X_{A_2})$. Then we have
\[
\|PQ\|_{\PP(^{k+l} L_p(\Omega,\mu))}=\sqrt[p]{\frac{k^k \, l^l}{(k+l)^{(k+l)}}} \, \|P\|_{\PP(^kL_p(\Omega,\mu))} \, \|Q\|_{\PP(^lL_p(\Omega,\mu))}.
\]
\end{lemma}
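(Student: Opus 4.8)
The plan is to mirror the structure of the proof of Lemma~\ref{distintasvars}, replacing the finite-dimensional splitting $z = x + y$ by the decomposition induced by the partition $\Omega = A_1 \sqcup A_2$. Given $f \in L_p(\Omega,\mu)$, write $f = f\X_{A_1} + f\X_{A_2}$; since $A_1$ and $A_2$ are essentially disjoint, we have the orthogonality-type identity $\|f\|_p^p = \|f\X_{A_1}\|_p^p + \|f\X_{A_2}\|_p^p$, which is the exact analogue of $\|z\|_p^p = \|x\|_p^p + \|y\|_p^p$. Using the hypotheses $P(f) = P(f\X_{A_1})$ and $Q(f) = Q(f\X_{A_2})$ together with $k$- and $l$-homogeneity, I would bound
\[
|P(f)Q(f)| \le \|P\|_{\PP(^kL_p)}\,\|Q\|_{\PP(^lL_p)}\,\|f\X_{A_1}\|_p^k\,\|f\X_{A_2}\|_p^l.
\]

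For the upper bound on $\|PQ\|$, I would take the supremum over $\|f\|_p = 1$ and set $a = \|f\X_{A_1}\|_p$, $b = \|f\X_{A_2}\|_p$, so that $a^p + b^p = 1$. The supremum of $a^k b^l$ under this constraint is exactly $\sqrt[p]{k^k l^l/(k+l)^{(k+l)}}$ by the same Lagrange multiplier computation as in Lemma~\ref{distintasvars}; this yields the ``$\le$'' inequality. For the reverse inequality I would build an explicit near-optimal test function. This is the one genuinely new point relative to the finite-dimensional case: rather than picking norm-attaining vectors, I would fix $\varepsilon > 0$ and choose $g_1, g_2$ of norm one, supported (essentially) on $A_1$ and $A_2$ respectively, with $|P(g_1)| > (1-\varepsilon)\|P\|$ and $|Q(g_2)| > (1-\varepsilon)\|Q\|$. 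Such $g_i$ exist because $\mu(A_i) > 0$, so the relevant restrictions of $P$ and $Q$ are nontrivial and their sups are approached; supporting $g_1$ on $A_1$ is harmless since $P$ ignores the $A_2$-part, and symmetrically for $g_2$. Setting
\[
f_0 = \sqrt[p]{\tfrac{k}{k+l}}\;g_1 + \sqrt[p]{\tfrac{l}{k+l}}\;g_2,
\]
disjointness of supports gives $\|f_0\|_p = 1$ and
\[
|P(f_0)Q(f_0)| > (1-\varepsilon)^2\sqrt[p]{\tfrac{k^k l^l}{(k+l)^{(k+l)}}}\,\|P\|\,\|Q\|.
\]
Letting $\varepsilon \to 0$ matches the upper bound and closes the proof.

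I expect the main obstacle to be the rigorous justification that the near-optimal $g_1, g_2$ can be chosen with supports contained in $A_1, A_2$. In the $\ell_p^N$ setting the norm-attaining vectors could be taken with nonzero entries only in the relevant coordinates, which was immediate; here one must argue that, since $P$ depends only on $f\X_{A_1}$, replacing any norm-one near-maximizer $g$ by $g\X_{A_1}$ does not decrease $|P(g)|$ while not increasing the norm, so after renormalizing one obtains an $A_1$-supported near-maximizer (and symmetrically on $A_2$). Everything else is a verbatim transcription of the two-polynomial case of Lemma~\ref{distintasvars}, with the coordinate partition replaced by the measurable partition; the inductive extension to $n$ polynomials is identical and, as the authors note, can be omitted.
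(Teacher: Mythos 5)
Your proposal is correct and follows essentially the same route as the paper: the paper's proof consists exactly of the decomposition $f = f\X_{A_1} + f\X_{A_2}$, the pointwise bound, the identity $\|f\|_p^p = \|f\X_{A_1}\|_p^p + \|f\X_{A_2}\|_p^p$, and then a reference back to Lemma~\ref{distintasvars} for the Lagrange-multiplier upper bound and the weighted test function. Your $\varepsilon$-approximation with near-maximizers truncated to $A_1$ and $A_2$ is a sound (and welcome) filling-in of the one detail the paper leaves implicit, namely that in $L_p(\Omega,\mu)$ norms need not be attained.
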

\begin{proof}
Given $f\in L_p(\Omega,\mu)$ we write it as $f=f\X_{A_1}+f\X_{A_2}$ and then
\begin{align*}
|P(f)Q(f)|= & |P(f\X_{A_1})Q(f\X_{A_2})| \\
\le & \|P\|_{\PP(^kL_p(\Omega,\mu))} \, \|Q\|_{\PP(^lL_p(\Omega,\mu))} \|f\X_{A_1}\|_p^k \, \|f\X_{A_2}\|_p^l.
\end{align*}
Since $\|f\|_p^p=\|f\X_{A_1}\|_p^p +\|f\X_{A_2}\|_p^p$ the proof continues as in Lemma~\ref{distintasvars}.
\end{proof}
Combining this lemma with the fact that $D_n(L_p(\mu)) = n^{|1/p-1/2|}$ we obtain the next result.
\begin{theorem}\label{teoprincipal2}
Let $P_1,\dots,P_n$ be homogeneous polynomials of degrees $k_1, \dots, k_n$ respectively on $L_p(\Omega,\mu)$,  $1\le p\le 2$. If $\mathbf k=k_1+\cdots+k_n$, then we have
\begin{equation}\label{eq-principal2}
\|P_1\cdots P_n\|_{\PP(^{\mathbf k}L_p(\Omega,\mu))} \ge \sqrt[p]{\frac{\prod_{i=1}^{n} {k_i^{k_i}}} {\mathbf k^{\mathbf k}}} \, \|P_1\|_{\PP(^{k_1}L_p(\Omega,\mu))} \cdots \|P_n\|_{\PP(^{k_n}L_p(\Omega,\mu))}.\nonumber
\end{equation}
If $\Omega$ admits a decomposition as $\Omega=A_1\sqcup \ldots \sqcup A_n$, then the constant is optimal.
\end{theorem}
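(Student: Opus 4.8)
The plan is to reproduce, almost verbatim, the argument used for Theorem~\ref{teoprincipal}, replacing the finite-dimensional ingredients by their $L_p$ counterparts. The disjoint-variable Lemma~\ref{distintasvars} is superseded by Lemma~\ref{distintasvars2}, and the equal-degree estimate \eqref{eq-gradosigualesp<2} is already available directly for $L_p(\Omega,\mu)$, since Remark~\ref{mismogrado-remark} derived it from Lemma~\ref{mismogrado} together with the bound $D_n(L_p(\mu))=n^{|1/p-1/2|}$. The only genuinely new device is a replacement for the passage from $\ell_p^N$ to $\ell_p^{N+1}$: instead of appending a coordinate, I would enlarge the measure space.

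Concretely, for the base case of two polynomials $P,Q$ of degrees $k\ge l$, the case $k=l$ is immediate from \eqref{eq-gradosigualesp<2}. When $k>l$, set $d=k-l$ and consider the enlarged space $\Omega'=\Omega\sqcup\{\ast\}$, where $\{\ast\}$ is an atom of measure one, so that $L_p(\Omega,\mu)$ sits isometrically inside $L_p(\Omega',\mu')=L_p(\Omega,\mu)\oplus_p\zC$ and any polynomial depending only on $\Omega$ keeps its norm. Taking $\phi(f)=f(\ast)$ and $S=\phi^d$ produces a norm-one polynomial of degree $d$ supported on $\{\ast\}$, hence disjoint from $P$ and $Q$. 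With this $S$ in hand, the chain \eqref{eq-paso1}--\eqref{eq-paso3} goes through word for word: Lemma~\ref{distintasvars2} yields the equalities isolating $\|PQS\|$ and factoring $\|QS\|$ (here $PQS$ has degree $2k$ and $QS$ has degree $k$), while \eqref{eq-gradosigualesp<2}, applied to the two degree-$k$ factors $P$ and $QS$, supplies the constant $4^{-k/p}$. The bookkeeping of exponents then collapses exactly as before to $\sqrt[p]{k^kl^l/(k+l)^{k+l}}$.

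The general statement follows by induction on $n$, precisely as in the final step of Lemma~\ref{distintasvars} and Theorem~\ref{teoprincipal}: group $P_1\cdots P_{n-1}$ against $P_n$, apply the two-factor inequality, and invoke the inductive hypothesis. For optimality, assume $\Omega=A_1\sqcup\cdots\sqcup A_n$. On each $A_i$ choose $g_i\in L_q(A_i)$ with $\|g_i\|_q=1$ and set $\phi_i(f)=\int_{A_i} fg_i\,d\mu$, a norm-one functional depending only on $A_i$; then the polynomials $P_i=\phi_i^{k_i}$ have pairwise disjoint supports. The $n$-fold version of Lemma~\ref{distintasvars2}, proved by the same induction, turns the displayed inequality into an equality for this choice, which shows the constant cannot be improved.

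I expect the only point requiring care to be the measure-space enlargement: one must check that $L_p(\Omega',\mu')$ is again of the form $L_p$ over a measure space (so that \eqref{eq-gradosigualesp<2} legitimately applies to it), that the inclusion $L_p(\Omega,\mu)\hookrightarrow L_p(\Omega',\mu')$ is isometric, and that it preserves the uniform norms of polynomials supported on $\Omega$. All three are routine, since adjoining an atom of positive finite measure keeps us inside the class of $L_p$ spaces, and the supremum defining $\|P\|$ over the larger unit ball is attained on functions supported in $\Omega$ whenever $P$ depends only on $\Omega$. Once this is settled, the computation is purely formal and identical to the finite-dimensional one.
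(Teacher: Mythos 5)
Your proposal is correct and takes essentially the same route as the paper: adjoin an atom of measure one to $\Omega$, take $S(f)=(f(\ast))^d$, run the chain \eqref{eq-paso1}--\eqref{eq-paso3} using Lemma~\ref{distintasvars2} and the equal-degree bound \eqref{eq-gradosigualesp<2}, and finish by induction on $n$. Your optimality construction with disjointly supported norm-one functionals $\phi_i^{k_i}$ is precisely the analogue of the $(e_i')^{k_i}$ argument that the paper invokes when it says the optimality is ``analogous to that of Theorem~\ref{teoprincipal}.''
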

\begin{proof} We prove the result for two polynomials. Let $P$ and $Q$ be homogeneous polynomials  of degree $k$ and $l$. If $k=l$, the result follows from Remark \ref{mismogrado-remark}. Then, we can suppose $k>l$. Let us define an auxiliary measure space $(\Omega', \mu')$ by adding an additional point $\{c\}$ to $\Omega$. The measure $\mu'$ in $\Omega'$ is given by $\mu'(U)=\mu(U)$ if $U\subseteq \Omega$, and $\mu'(U)=\mu(U\cap \Omega)+1$ whenever $c\in U$. It is clear that we have $\Omega'= \Omega \sqcup \{c\}$. Let us consider the polynomials $P', Q'$ and $S$ of degree $k$, $l$ and $d=k-l$ respectively, defined on $L_p(\Omega',\mu')$ by $P'(f)=P(f|_\Omega)$, $Q'(f)=Q(f|_\Omega)$ and $S(f)=(f(c))^d$. Observe that $\|S\|_{\PP(^dL_p(\Omega',\mu'))}=1$. The polynomials $P'Q'$ and $S$ are in the conditions of Lemma~\ref{distintasvars2}. Proceeding as in the proof of Theorem~\ref{teoprincipal}, we have
\begin{align*}
\|PQ\|_{\PP(^{k+l}L_p(\Omega,\mu))} = & \|P'Q'\|_{\PP(^{k+l}L_p(\Omega',\mu'))} \, \|S\|_{\PP(^dL_p(\Omega',\mu'))} \\
= & \sqrt[p]{\frac{((k+l)+d)^{(k+l)+d}}{(k+l)^{(k+l)}d^d}}  \ \|P'Q'S\|_{\PP(^{2k}L_p(\Omega',\mu'))} \\
\ge & \sqrt[p]{\frac{((k+l)+d)^{(k+l)+d}}{(k+l)^{(k+l)}d^d}} \, \frac{1}{4^{k/p}} \, \|P'\|_{\PP(^{k}L_p(\Omega',\mu'))} \, \|Q'S\|_{\PP(^{k}L_p(\Omega',\mu'))} \\
= & \sqrt[p]{\frac{k^k \, l^l}{(k+l)^{(k+l)}}} \, \|P'\|_{\PP(^{k}L_p(\Omega',\mu'))} \, \|Q'\|_{\PP(^{l}L_p(\Omega',\mu'))} \, \|S\|_{\PP(^dL_p(\Omega',\mu'))}\\
= & \sqrt[p]{\frac{k^k \, l^l}{(k+l)^{k+l}}} \, \|P\|_{\PP(^{k}L_p(\Omega,\mu))} \, \|Q\|_{\PP(^{l}L_p(\Omega,\mu))}.
\end{align*}
The general case follows by induction exactly as in the proof of Lemma \ref{distintasvars}, and the optimality of the constant is  analogous to that of Theorem~\ref{teoprincipal}.
\end{proof}
Now we show how the previous proofs can be adapted to obtain the corresponding results for the Schatten classes.
Let $P_1,\dots,P_n:\mathcal S_p(H)\to \mathbb C$ be $k-$homogeneous polynomials on $\mathcal S_p=\mathcal S_p(H)$, the $p$-Schatten class of operators on the Hilbert space $H$. In Corollary 2.10 of \cite{T}, Tomczak-Jaegermann proved that $D_n(\mathcal S_p) \leq n^{|1/p-1/2|}$. Then, by Lemma \ref{mismogrado}, we have
\begin{equation}\label{schatteniguales}\|P_1\cdots P_n\|_{\PP(^{nk}\mathcal S_p(H)	)} \ge \frac 1 {n^{nk/p}} \, \|P_1\|_{\PP(^k\mathcal S_p(H))}\cdots \|P_n\|_{\PP(^k\mathcal S_p(H))}.\nonumber
\end{equation}
Suppose that $H=H_1\oplus H_2$ (an orthogonal sum) and let $\pi_1,\pi_2:H\rightarrow H$  be the orthogonal projections onto $H_1$ and $H_2$ respectively.
If the homogeneous polynomials $P,Q:\mathcal S_p(H) \to \mathbb C$ satisfy $P(s)= P(\pi_1\circ s\circ \pi_1)$ and $Q(s)= Q(\pi_2\circ s\circ \pi_2)$ for all $s\in \mathcal S_p$, we can think of $P$ and $Q$ as depending on different variables. Moreover, for each $s\in \mathcal S_p(H)$, it is rather standard to see that
\begin{equation}\label{eq-normas a la p}\|\pi_1\circ s\circ \pi_1\|_{\mathcal S_p}^p +\|\pi_2\circ s\circ \pi_2\|_{\mathcal S_p}^p=\|\pi_1\circ s\circ \pi_1 + \pi_2\circ s\circ \pi_2\|_{\mathcal S_p}^p.
\end{equation}
 Also, we have
$$\pi_1\circ s\circ \pi_1 + \pi_2\circ s\circ \pi_2 = \frac12  \Big( s + (\pi_1 - \pi_2)\circ s\circ (\pi_1 - \pi_2) \Big).
$$
By the ideal property of Schatten norms, the last operator has norm (in $\mathcal S_p$) not greater than $\|s\|_{\mathcal S_p}$. We then have
\begin{equation}\label{eq-normas a la p con pi} \|\pi_1\circ s\circ \pi_1\|_{\mathcal S_p}^p +\|\pi_2\circ s\circ \pi_2\|_{\mathcal S_p}^p \le \|s\|_{\mathcal S_p}^p.\end{equation}
Now, with \eqref{eq-normas a la p} and \eqref{eq-normas a la p con pi} at hand, we  can follow the proof of Lemma~\ref{distintasvars} to obtain the analogous result for Schatten classes.

Finally, the trick of adding a variable in Theorem~\ref{teoprincipal} or a singleton in Theorem~\ref{teoprincipal2} can be performed for Schatten classes just taking the orthogonal sum of $H$ with a (one dimensional) Hilbert space. As a consequence, mimicking the proof of Theorem~\ref{teoprincipal} we obtain the following.
\begin{theorem}\label{teoprincipal3}
Let $P_1,\dots,P_n$ be polynomials of degrees $k_1,\dots,k_n$ respectively on $\mathcal S_p(H)$ with $1\le p\le 2$. If $\mathbf k=k_1+\cdots+k_n$, then we have
\begin{equation}\label{eq-principal} \|P_1\cdots P_n\|_{\PP(^{\mathbf k}\mathcal S_p(H))} \ge \sqrt[p]{\frac{\prod_{i=1}^{n} {k_i^{k_i}}} {\mathbf k^{\mathbf k}}}\,  \|P_1\|_{\PP(^{k_1}\mathcal S_p(H))} \cdots \|P_n\|_{\PP(^{k_n}\mathcal S_p(H))}.\nonumber
\end{equation}
The constant is optimal provided that $dim(H)\ge n$.
\end{theorem}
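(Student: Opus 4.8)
The plan is to follow, almost verbatim, the blueprint of Theorem~\ref{teoprincipal}, substituting the two structural ingredients used there by their Schatten counterparts. The role of the same-degree estimate \eqref{eq-gradosigualesp<2} is played by \eqref{schatteniguales}, already deduced from Lemma~\ref{mismogrado} together with the Tomczak-Jaegermann bound $D_n(\mathcal S_p)\le n^{|1/p-1/2|}$. What remains is the analogue of Lemma~\ref{distintasvars}: an exact product formula for polynomials depending on different variables. Once this is in hand, the adding-a-variable trick reproduces both the general inequality and the optimality.

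First I would prove the Schatten version of Lemma~\ref{distintasvars2}: if $P,Q:\mathcal S_p(H)\to\zC$ have degrees $k,l$ and satisfy $P(s)=P(\pi_1\circ s\circ\pi_1)$ and $Q(s)=Q(\pi_2\circ s\circ\pi_2)$, then
\[
\|PQ\|_{\PP(^{k+l}\mathcal S_p(H))}=\sqrt[p]{\frac{k^k\,l^l}{(k+l)^{(k+l)}}}\,\|P\|_{\PP(^k\mathcal S_p(H))}\,\|Q\|_{\PP(^l\mathcal S_p(H))}.
\]
For the upper bound I would factor $|P(s)Q(s)|=|P(\pi_1\circ s\circ\pi_1)|\,|Q(\pi_2\circ s\circ\pi_2)|\le \|P\|\,\|Q\|\,\|\pi_1\circ s\circ\pi_1\|_{\mathcal S_p}^k\,\|\pi_2\circ s\circ\pi_2\|_{\mathcal S_p}^l$ and then invoke \eqref{eq-normas a la p con pi} to bound the two corner norms by $a,b\ge 0$ with $a^p+b^p\le 1$ (taking $\|s\|_{\mathcal S_p}=1$); maximizing $a^k b^l$ under this constraint by Lagrange multipliers yields the stated constant. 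For the reverse inequality I would fix norm-one $s_1=\pi_1\circ s_1\circ\pi_1$ and $s_2=\pi_2\circ s_2\circ\pi_2$ on which $P$ and $Q$ (nearly) attain their norms and set $s_0=\sqrt[p]{k/(k+l)}\,s_1+\sqrt[p]{l/(k+l)}\,s_2$; here the exact additivity \eqref{eq-normas a la p} gives $\|s_0\|_{\mathcal S_p}=1$ while $|P(s_0)Q(s_0)|$ meets the bound. The passage from two to $n$ factors is the same induction as in Lemma~\ref{distintasvars}.

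With this product formula available, I would establish \eqref{eq-principal} for two polynomials $P,Q$ of degrees $k>l$ (the case $k=l$ being \eqref{schatteniguales}). Taking the orthogonal sum of $H$ with a one-dimensional Hilbert space and letting $S$ be the degree $d=k-l$ polynomial supported on the new summand (so $\|S\|=1$), I would run the identical chain of two equalities and one inequality as in Theorem~\ref{teoprincipal}: the Schatten product formula raises $PQ$ to $PQS$ of degree $2k$, \eqref{schatteniguales} with $n=2$ separates $P$ from $QS$, and the product formula again detaches $Q$ from $S$; the constants telescope to $\sqrt[p]{k^k l^l/(k+l)^{k+l}}$. Induction on $n$ gives the general inequality. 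For optimality when $\dim(H)\ge n$, I would pick $n$ mutually orthogonal one-dimensional corners and take $P_i$ to be the degree-$k_i$ polynomial supported on the $i$-th corner; the $n$-factor product formula then turns \eqref{eq-principal} into an equality.

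The only genuinely new point---everything else being a transcription of the $\ell_p$ argument---is the product formula, and inside it the delicate issue is that a general $s\in\mathcal S_p$ is \emph{not} the sum of its corners $\pi_1\circ s\circ\pi_1+\pi_2\circ s\circ\pi_2$, since the off-diagonal blocks are discarded. This is exactly why the two facts \eqref{eq-normas a la p} and \eqref{eq-normas a la p con pi} are recorded separately: the inequality \eqref{eq-normas a la p con pi} is all that the upper bound requires, whereas the exact additivity \eqref{eq-normas a la p}, valid for the block-diagonal $s_0$, is what lets the extremal operator attain the constant. Verifying \eqref{eq-normas a la p} reduces to the observation that $\pi_1\circ s\circ\pi_1$ and $\pi_2\circ s\circ\pi_2$ have orthogonal ranges and orthogonal initial spaces, so their singular values concatenate and $\|\cdot\|_{\mathcal S_p}^p$ is additive---routine, but the crux of the matter.
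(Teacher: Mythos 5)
Your proposal is correct and follows essentially the same route as the paper: the same-degree estimate \eqref{schatteniguales} from Lemma~\ref{mismogrado} plus the Tomczak-Jaegermann bound, the exact product formula for polynomials living on orthogonal corners (upper bound via \eqref{eq-normas a la p con pi}, extremal block-diagonal operator via \eqref{eq-normas a la p}), the one-dimensional orthogonal summand playing the role of the added variable, and optimality via polynomials supported on $n$ orthogonal corners. You even make explicit the details the paper leaves as ``mimicking the proof of Theorem~\ref{teoprincipal},'' including the key point that a general operator is not the sum of its corners, which is precisely why the two norm facts are recorded separately.
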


\section{Remarks on the case $p>2$}

We end this note with some comment on the constant for $p>2$. If $X$ is a Banach space, let $M(X, k_1, \ldots, k_n)$ be the largest value of $M$ such that
\begin{equation*}\label{best}
\Vert P_1 \cdots P_n \Vert_{\PP(^{\kkk} X)} \ge M \ \Vert P_1\Vert_{\PP(^{k_1}X)}\cdots \Vert P_n\Vert_{\PP(^{k_n}X)}
\end{equation*}
for any set of homogeneous polynomials $P_1, \ldots, P_n$ on $X$, of degrees $k_1, \ldots, k_n$ respectively. From \cite{BST}, \cite{P} and Theorem \ref{teoprincipal} we know that
\[
M(\ell_p^N, k_1, \ldots, k_n)=\sqrt[p]{\frac{\prod_{i=1}^{n}{k_i^{k_i}}} {\kkk^{\mathbf k}} },
\]
provided that $1\le p \le 2$ and $ N \ge n$.
In \cite[Proposition 8]{RS}, the authors show that the best constant for products of linear functionals on an infinite dimensional Banach space is worse than the corresponding one for Hilbert spaces. In our notation, they show that $$M(\ell_2, 1, \ldots, 1) \ge M(X, 1, \ldots, 1)$$ for every infinite dimensional Banach space $X$.
Next theorem, together with Theorems \ref{teoprincipal} and \ref{teoprincipal2}, show that the same holds for products of homogeneous polynomials in $\ell_p^N$ and $L_p$ spaces, provided that the dimension is greater than or equal to the number of factors. That is, the constant for Hilbert spaces is better than the constant of any other $L_p$ space  for homogeneous polynomials of any degree, even in the finite dimensional setting.
\begin{theorem} For $ N\ge n$ and $2\le p\le \infty$, we have
\[
M(\ell_2^{N}, k_1, \ldots, k_n) \ge M(\ell_p^{N}, k_1, \ldots, k_n) \ge   \big({{n}^{k_1+ \cdots + k_n}}\big)^{\frac 1 p - \frac 1 2}  \  M(\ell_2^{N}, k_1, \ldots, k_n).
\]
The same holds for $L_p(\Omega,\mu)$ whenever $\Omega$ admits a decomposition as in Theorem~\ref{teoprincipal2}.
\end{theorem}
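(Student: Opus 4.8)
The statement is a two-sided estimate around the known Hilbert value $M(\ell_2^N,k_1,\dots,k_n)=\sqrt c$, where I write $c:=\prod_{i=1}^n k_i^{k_i}/\kkk^{\kkk}\le 1$ (recall $\kkk=k_1+\cdots+k_n$); this value is sharp and attained, e.g. by the monomials $z_i^{k_i}$, and is the content of \eqref{ctel2}. Throughout let $q$ be the conjugate exponent of $p$, so that $1/2-1/p=1/q-1/2\ge 0$ since $p\ge 2$. The plan is to prove the two inequalities independently. The right-hand one is essentially free: applying Lemma~\ref{mismogrado} with $X=\ell_p^N$ and feeding in Lewis' bound \eqref{eq-lewis}, $D_n(\ell_p^N)\le n^{|1/p-1/2|}=n^{1/2-1/p}$ (available because $N\ge n$), I obtain for every admissible tuple
\[
\|P_1\cdots P_n\|\ \ge\ \sqrt c\; n^{-(1/2-1/p)\kkk}\prod_{i=1}^n\|P_i\|\ =\ \sqrt c\;(n^{\kkk})^{1/p-1/2}\prod_{i=1}^n\|P_i\|,
\]
which is exactly $M(\ell_p^N,k_1,\dots,k_n)\ge (n^{\kkk})^{1/p-1/2}\,M(\ell_2^N,k_1,\dots,k_n)$.

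For the left-hand inequality I must exhibit polynomials on $\ell_p^N$ whose factor ratio does not exceed $\sqrt c$, and this is where the real work lies. The obvious candidates, the monomials $(e_i')^{k_i}$, fail: by Lemma~\ref{distintasvars} their ratio equals $c^{1/p}\ge c^{1/2}$, which is the wrong side. The remedy is to \emph{spread} the variables by a flat unitary. Let $F$ be the normalized $n\times n$ discrete Fourier matrix, so $F$ is unitary and $|F_{ij}|=n^{-1/2}$ for all $i,j$; put $L_i(z)=\sum_{j=1}^n F_{ij}z_j$ and $P_i=L_i^{\,k_i}$ on the first $n\le N$ coordinates of $\ell_p^N$. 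Since each $L_i$ depends only on $z_1,\dots,z_n$, all suprema are attained on vectors supported there, so $\|P_i\|=\|(F_{i1},\dots,F_{in})\|_q^{k_i}=n^{(1/q-1/2)k_i}$ and hence $\prod_i\|P_i\|=n^{(1/q-1/2)\kkk}$. For the norm of the product I would pass to the homogeneous quotient and substitute $w=Fz$; as $F$ is unitary, $\|z\|_2=\|w\|_2$ and $\|z\|_p=\|F^{*}w\|_p$, giving
\[
\|P_1\cdots P_n\|=\sup_{w\neq 0}\frac{\prod_{i=1}^n|w_i|^{k_i}}{\|F^{*}w\|_p^{\,\kkk}}.
\]
Two elementary inequalities then close the argument: the Lagrange computation $\prod_i|w_i|^{k_i}\le \sqrt c\,\|w\|_2^{\kkk}$ (the sharp Hilbert bound once more), and the power-mean inequality $\|F^{*}w\|_p\ge n^{1/p-1/2}\|F^{*}w\|_2=n^{1/p-1/2}\|w\|_2$, valid for $p\ge 2$. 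Together they bound the supremum by $\sqrt c\,n^{-(1/p-1/2)\kkk}=\sqrt c\,n^{(1/q-1/2)\kkk}$, and dividing by $\prod_i\|P_i\|$ leaves exactly $\sqrt c$, so $M(\ell_p^N,k_1,\dots,k_n)\le M(\ell_2^N,k_1,\dots,k_n)$. I expect this construction to be the only genuine obstacle: one has to realize that monomials are extremal in the wrong direction, that a maximally spread (flat) orthonormal system of linear forms is what is needed, and that the a priori intractable supremum collapses, after the unitary change of variables, to the Hilbert value corrected by precisely the power-mean factor $n^{1/p-1/2}$ — which cancels the growth $n^{1/q-1/2}$ of the individual norms exactly because $1/p-1/2=1/2-1/q$.

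Finally I would transfer both bounds to $L_p(\Omega,\mu)$. The lower bound carries over verbatim, since Lemma~\ref{mismogrado} holds for any Banach space and \eqref{eq-lewis} is available for $L_p$. For the upper bound, a decomposition $\Omega=A_1\sqcup\cdots\sqcup A_n$ produces an isometric copy of $\ell_p^n$ inside $L_p(\Omega,\mu)$, namely the span of the normalized indicators $\mu(A_i)^{-1/p}\X_{A_i}$, and the conditional expectation onto the $\sigma$-algebra generated by the $A_i$ is a norm-one projection onto it. A norm-one projection transports a worst-case tuple from the subspace to the whole space without altering its ratio, so $M(L_p(\Omega,\mu),k_1,\dots,k_n)\le M(\ell_p^n,k_1,\dots,k_n)\le M(\ell_2^n,k_1,\dots,k_n)=M(\ell_2^N,k_1,\dots,k_n)$, completing the chain.
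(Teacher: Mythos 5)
Your proof is correct, and its skeleton is the same as the paper's: the right-hand inequality comes from Lemma~\ref{mismogrado} combined with Lewis' estimate \eqref{eq-lewis}, and the left-hand one from testing $M(\ell_p^N,k_1,\ldots,k_n)$ on powers of a flat orthogonal system of linear forms, where the $\ell_p$--$\ell_2$ comparison factor cancels exactly the growth of the individual norms. The genuine difference is how the product norm of the test tuple is controlled. The paper takes $n$ rows $g_j$ of the unnormalized $N\times N$ Fourier matrix and computes $\|g_1^{k_1}\cdots g_n^{k_n}\|_{\PP(^{\kkk}\ell_2^N)}$ \emph{exactly}: it rotates coordinates so that the $g_j$ depend on different variables and invokes the equality case of Lemma~\ref{distintasvars} at $p=2$ (citing \cite{ADR} and Remark~4.2 of \cite{P}), and only afterwards passes from $\ell_2^N$ to $\ell_p^N$ norms. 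You work instead with the normalized $n\times n$ Fourier matrix on the first $n$ coordinates and never need that equality case: the unitary substitution $w=Fz$, weighted AM--GM, and the power-mean inequality $\|x\|_p\ge n^{1/p-1/2}\|x\|_2$ give an upper bound on the supremum that already suffices. Your route is thus more self-contained, using only the value $M(\ell_2^N)=\sqrt{\prod_i k_i^{k_i}/\kkk^{\kkk}}$ from \eqref{ctel2} plus elementary inequalities, whereas the paper's computation yields the extra information that its tuple attains equality in $\ell_2^N$. For $L_p(\Omega,\mu)$ the two arguments agree in substance: the paper's assertion that the decomposition produces a 1-complemented copy of $\ell_p^n$, from which the claim ``readily follows,'' is exactly what you make explicit via the conditional-expectation projection and the observation that composing a tuple with a norm-one projection preserves its ratio; the only shared technical caveat is that if some $\mu(A_i)=\infty$ one should first replace $A_i$ by a subset of finite positive measure so that the normalized indicator lies in $L_p(\Omega,\mu)$.
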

\begin{proof} The second inequality is a direct consequence of Lemma~\ref{mismogrado}, so let us show the first one. Consider the linear forms on $\ell_p^{N}$ defined by the vectors
$$
g_j=\left(1,e^{\frac{2\pi i j}{N}},e^{\frac{2\pi i 2j}{N}},e^{\frac{2\pi i 3j}{N}}, \ldots ,e^{\frac{2\pi i (N-1)j}{N}}\right) \mbox{ for } j=1, \ldots ,n.
$$
These are orthogonal vectors in $\ell_2^N$. We can choose an orthogonal coordinate system such that the $g_i$'s  depend on different variables (we are in $\ell_2^N$). So by Lemma~\ref{distintasvars}, inequality ~\eqref{problema} holds as an equality with the constant for Hilbert spaces given in \eqref{ctel2}:
$$
M(\ell_2^{N}, k_1, \ldots, k_n) \ \|g_1^{k_1}\|_{\PP(^{k_1}\ell_2^N)} \ldots \|g_n^{k_n}\|_{\PP(^{k_n}\ell_2^N)} =  \, \|g_1^{k_1} \ldots g_n^{k_n}\|_{\PP(^{\kkk}\ell_2^N)}.
$$
For products of orthogonal linear forms this equality was observed in~\cite{ADR} and for the general case (with arbitrary powers) in Remark 4.2 of~\cite{P}.

On the other hand, we have $\|g_j^{k_j}\|_{\PP(^{k_j}\ell^N_2)}=(N^{1/2})^{k_j}$, and  $\|g_l^{k_j}\|_{\PP(^{k_j}\ell_p^N)}=(N^{1-\frac{1}{p}})^{k_j}$. Combining all this we obtain the following:
\begin{eqnarray*} M(\ell_2^{N}, k_1, \ldots, k_n)&=&\frac {\|g_1^{k_1} \ldots g_n^{k_n}\|_{\PP(^{\kkk}\ell_2^N)} } {\|g_1^{k_1}\|_{\PP(^{k_1}\ell_2^N)} \ldots \|g_n^{k_n}\|_{\PP(^{k_n}\ell_2^N)}}\\
&= &  \frac {\|g_1^{k_1} \ldots g_n^{k_n}\|_{\PP(^{\kkk}\ell_2^N)} } {(N^{\frac{1}{p}-\frac{1}{2}})^{k_1}\|g_1^{k_1}\|_{\PP(^{k_1}\ell_p^N)} \ldots (N^{\frac{1}{p}-\frac{1}{2}})^{k_n}\|g_n^{k_n}\|_{\PP(^{k_n}\ell_p^N)}}\\
&\ge &  \frac {\|g_1^{k_1} \ldots g_n^{k_n}\|_{\PP(^{\kkk}\ell_p^N)} N^{\left(\frac{1}{p}-\frac{1}{2}\right)\kkk }} {(N^{\frac{1}{p}-\frac{1}{2}})^{k_1}\|g_1^{k_1}\|_{\PP(^{k_1}\ell_p^N)} \ldots (N^{\frac{1}{p}-\frac{1}{2}})^{k_n}\|g_n^{k_n}\|_{\PP(^{k_n}\ell_p^N)}} \\
&= & \frac {\|g_1^{k_1} \ldots g_n^{k_n}\|_{\PP(^{\kkk}\ell_p^N)} } { \|g_1^{k_1}\|_{\PP(^{k_1}\ell_p^N)} \ldots \|g_n^{k_n}\|_{\PP(^{k_n}\ell_p^N)}}
 \\
& \ge & M(\ell_p^{N}, k_1, \ldots, k_n).
\end{eqnarray*}

This shows the statement for $\ell_p^N$. Since the space $L_p(\Omega,\mu)$, with our assumptions on~$\Omega$, contains a 1-complemented copy of $\ell_p^n$, the statement for $L_p(\Omega,\mu)$ readily  follows. \end{proof}

\end{document}